\newtheorem{thm}{Theorem}[section]
\newtheorem{lemma}[thm]{Lemma}
\newtheorem{prop}[thm]{Proposition}
\newtheorem{remark}[thm]{Remark}
\newcommand{\LoopB}[1]{{\Omega}_{B}(#1)}
\newcommand{\cover}[1]{\widetilde{#1}}
\newcommand{\plocal}[1]{{#1}_{(p)}}
\newcommand{\aplocal}[1]{{#1}^B_{(p)}}
\newcommand{\aqlocal}[1]{{#1}^B_{(q)}}
\newcommand{\Plocal}[1]{{#1}_{(P)}}
\newcommand{\Pnlocal}[1]{{#1}_{P}}
\newcommand{\aPlocal}[1]{{#1}^B_{(P)}}
\newcommand{\aPnlocal}[1]{{#1}^B_{P}}
\newcommand{\Pblocal}[1]{{#1}_{(Q)}}
\newcommand{\Pbnlocal}[1]{{#1}_{Q}}
\newcommand{\aPblocal}[1]{{#1}^B_{(Q)}}
\newcommand{\aPbnlocal}[1]{{#1}^B_{Q}}
\newcommand{\zlocal}[1]{{#1}_{(0)}}
\newcommand{\azlocal}[1]{{#1}^B_{(0)}}
\newcommand{\integer}{\mathbb{Z}}
\newcommand{\sq}{{\mathcal S\kern-0.2em q}}
\newcommand{\comp}{\smash{\lower-.1ex\hbox{\scriptsize$\circ$}}}
\title{\sc{Co-H-spaces and Almost Localization}}
\author{Cristina ~Costoya}
\address[C.~Costoya]{Departamento de Computaci\'on, \'Alxebra,
Universidade da Coru{\~n}a, Campus de Elvi{\~n}a, s/n, 15071 - A Coru{\~n}a, Spain.}
\email{cristina.costoya@udc.es}
\author{Norio Iwase}
\address[N.~Iwase]{Faculty of Mathematics, Kyushu University,  Fukuoka 810-8560, Japan}
\email{iwase@math.kyushu-u.ac.jp}
\begin{document}
\maketitle
%
%
\begin{abstract}
Apart from simply-connected spaces, a non simply-connected co-H-space is a typical example of a space $X$ with a co-action of $B\pi_1(X)$ along $r^X : X \rightarrow B\pi_{1}(X)$  the classifying map of the universal covering. If such a space $X$ is actually a co-H-space, then the fibrewise $p$-localization of $r^X$ (or the `almost' $p$-localization of $X$) is a fibrewise co-H-space (or an `almost' co-H-space, resp.) for every prime $p$.  In this paper, we show that the converse statement is true, i.e., for a non simply-connected space $X$ with
a co-action of $B\pi_1(X)$ along $r^X$, $X$ is a co-H-space if, for every prime $p$, the almost $p$-localization of $X$ is an almost co-H-space.
\end{abstract}
%
%

\section{Fundamentals and Results}\label{fundamentals}

We assume that spaces have the homotopy type of CW complexes and are based, and maps and homotopies preserve base points. A space $X$ is a co-H-space if there exists a comultiplication, say $\mu^X: X \rightarrow X \vee X$ satisfying $j^X \circ \mu^X\simeq \Delta^X$, where $j^X: X\vee X \hookrightarrow X \times X$ is the inclusion and $\Delta^X : X \rightarrow X \times X$ is the diagonal.
Similar to spaces, we say that a group $G$ is a co-$H$-group if there exists a homomorphism $G \rightarrow G \ast G$ so that the composition with the first and second projections are the identity of $G$.
Thus the fundamental group of a co-H-space is a co-$H$-group and the classifying space of a co-H-group is a co-H-space.

When $X$ is a simply-connected co-H-space, the $p$-localization $\plocal{X}$ is also a co-H-space for any prime $p$.
The immediate problem is whether the converse statement holds.
In \cite{Costoya:loc_co-hopf}, the first author settles the answer in positive when $X$ is a finite simply-connected complex.
In this paper, we extend the above result to non simply-connected spaces in terms of  fibrewise $p$-localization (see \cite{Bendersky:partial-localization}, \cite{May:fibrewise-local}) or paraphrasing the second author, in terms of  almost $p$-localization (see \cite{Iwase:counter-ganea}), rather than a usual $p$-localization since the only nilpotent non simply-connected co-H-space is the circle \cite{HMR:co-loop}.

From now on, we assume that a space $X$ is the total space of a fibrewise space $r^X : X \rightarrow B\pi_1(X)$, where $r^{X}$ is the classifying map of $c^X : \widetilde{X} \to X$, the universal covering of $X$.

A co-action of $B\pi_1(X)$ along $r^X$ is a map $\nu: X \rightarrow B\pi_1(X) \vee X$ such that when composed with the first projection $B\pi_1(X) \vee X \rightarrow B\pi_{1}(X)$ we obtain $r^X$ and composed with the second projection $B\pi_1(X) \vee X \rightarrow X$ we obtain the identity $1_X$.
In other words, it consists of a copairing $\nu: X \rightarrow B\pi_{1}(X) \vee X$ with coaxes $r^X$ and $1_X$ in the sense of Oda \cite{Oda:co-pairing}.
Since the fundamental group of a space $X$ with a co-action of $B\pi_1(X)$ along $r^X$ is clearly a co-$H$-group, $\pi_1(X)$ is free by \cite{EG:cat1} or \cite{ISS:hom_co-h}, and hence $B\pi_1(X)$ has the homotopy type of a bunch of circles, say $B$. 
Thus, $X$ is a fibrewise space over a bunch of circles $B = B\pi_1(X)$.
Let $s^X : B \rightarrow X$ represent the generators of $\pi_1(X)$ associated with circles.
We may assume that $r^X\circ s^X \simeq 1_B$ and so $X$ is a fibrewise-pointed space over $B$.

For such a space admitting a co-action of $B$  along $r^X : X \rightarrow B$, a retraction map $\rho: B \vee D(X) \rightarrow X$ is constructed in \cite[Theorem 3.3.]{ISS:hom_co-h}, where $D(X)$ a simply-connected finite complex. If we factorize $\rho_{\mid_ D(X)} : D(X) \rightarrow X$ through $  \widetilde X $, we deduce that if there exists a co-action of $B$ along $r^X$, then $X$ is dominated by $B \vee \widetilde X$ .
The space $B$ is a co-H-space and, if $\widetilde X$ is  a co-H-space, then $X$ is dominated by a co-H-space hence a co-H-space itself.  On the other hand, recall  that co-H-spaces are spaces with Lusternik Schnirelmann category, L-S cat, less than or equal to one. Since the L-S cat of $\widetilde X$ can not exceed the L-S cat of $X$ \cite{Fox:ls-cat}
if $X$ is a co-H-space then $\widetilde X$ is also a co-H-space. Gathering these results altogether and, considering that the almost localization is natural, we obtain the following result.

\begin{prop}\label{prop:domination}
Let $r^{X} : X \to B\pi_{1}(X)$ be a fibrewise space over $B = B\pi_{1}(X)$ with a co-action of $B$ along $r^{X}$.
Then, the following two statements hold.
\begin{enumerate}
\item The space $X$ is a co-H-space if and only if $\widetilde X$ is a co-H-space.
\item The almost $p$-localization of $X$ for a prime $p$, $X^B_{(p)}$,  is also a fibrewise space over $B$ with a co-action of $B$ along
the classifying map of the universal covering.
\end{enumerate}
\end{prop}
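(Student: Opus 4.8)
The plan is to treat the two assertions separately: (1) is a repackaging of the domination statement established just before the proposition, while (2) is a naturality argument for the almost localization. For (1), in the ``if'' direction, assume $\widetilde X$ is a co-H-space. Since $\pi_1(X)$ is free, $B = B\pi_1(X)$ has the homotopy type of a bunch of circles and so is a co-H-space; as a wedge of co-H-spaces is a co-H-space, $B \vee \widetilde X$ is a co-H-space. By \cite[Theorem 3.3]{ISS:hom_co-h} there is a retraction $\rho \colon B \vee D(X) \to X$ with $D(X)$ a simply-connected finite complex, and since $D(X)$ is simply connected $\rho|_{D(X)}$ lifts along the universal covering $c^X \colon \widetilde X \to X$; assembling this lift with $s^X$ on the $B$-summand presents $X$ as a homotopy retract of $B \vee \widetilde X$, whence $X$ is a co-H-space. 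In the ``only if'' direction, if $X$ is a co-H-space then $\cat X \le 1$, so by Fox's theorem \cite{Fox:ls-cat} we get $\cat \widetilde X \le \cat X \le 1$, and since a connected CW complex is a co-H-space precisely when its L-S category is at most one, $\widetilde X$ is a co-H-space.

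For (2), I would first dispose of the $\pi_1$-bookkeeping. Write $\ell \colon X \to X^B_{(p)}$ for the fibrewise $p$-localization of $r^X \colon X \to B$; this is a map over $B$ which on the fibre is the localization $\widetilde X \to \plocal{\widetilde X}$. Since $\plocal{\widetilde X}$ is again simply connected, the homotopy exact sequence of $\plocal{\widetilde X} \to X^B_{(p)} \to B$ gives $\pi_1(X^B_{(p)}) \cong \pi_1(B) = \pi_1(X)$, so $B = B\pi_1(X^B_{(p)})$, the universal cover of $X^B_{(p)}$ is $\plocal{\widetilde X}$, and the structure map $X^B_{(p)} \to B$ is its classifying map. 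To produce the co-action I would localize the copairing: the composite $(1_B \vee \ell) \circ \nu \colon X \to B \vee X^B_{(p)}$ is a map over $B$, and using naturality of the almost localization --- together with the facts that the almost localization of $B$ is $B$ and that the construction is compatible with the wedge $B \vee (-)$ --- it factors, uniquely up to fibrewise homotopy, as $\nu^B_{(p)} \circ \ell$ for a map $\nu^B_{(p)} \colon X^B_{(p)} \to B \vee X^B_{(p)}$. Postcomposing with the two projections and using that $\ell$ induces bijections on the relevant homotopy sets, together with $\proj_1 \circ \nu = r^X$ and $\proj_2 \circ \nu = 1_X$, then yields $\proj_1 \circ \nu^B_{(p)} \simeq r^{X^B_{(p)}}$ and $\proj_2 \circ \nu^B_{(p)} \simeq 1_{X^B_{(p)}}$, so $\nu^B_{(p)}$ is the desired co-action.

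The retract argument and the $\pi_1$-computation are routine; the one place calling for care is the factorization in (2). The subtlety is that $B \vee X \to B$ is \emph{not} a nilpotent fibration --- the fundamental group of its homotopy fibre is free and non-abelian --- so one cannot literally feed $B \vee X$ into the (nilpotent, fibrewise) localization functor. I would get around this either by verifying that $B \vee X^B_{(p)}$ is local with respect to the fibrewise $p$-equivalence $\ell$ in the appropriate homotopy-theoretic sense, so that the universal property ``$\ell^* \colon [X^B_{(p)}, W]_B \to [X, W]_B$ is a bijection'' still applies with $W = B \vee X^B_{(p)}$, or by constructing $\nu^B_{(p)}$ directly from the (functorial, hence $\pi_1(X)$-equivariant) $p$-localization of the fibre $\widetilde X$ via the Borel-construction description of the almost localization. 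I expect reconciling the ordinary wedge in the definition of the co-action with the fibrewise localization functor to be the only genuine obstacle; everything else is formal.
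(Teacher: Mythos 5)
Your proof of part (1) is exactly the paper's argument: the ``if'' direction via the retraction $\rho: B\vee D(X)\to X$ of \cite[Theorem 3.3]{ISS:hom_co-h} factored through $\widetilde X$ to exhibit $X$ as dominated by the co-H-space $B\vee\widetilde X$, and the ``only if'' direction via Fox's inequality for the L-S category of a covering space together with the characterization of co-H-spaces as spaces of category at most one. For part (2) the paper offers only the phrase ``the almost localization is natural,'' so your more detailed account --- the $\pi_1$ computation, the localization of the copairing, and in particular your observation that $B\vee X\to B$ is not a nilpotent fibration and must be handled by checking locality of $B\vee X^B_{(p)}$ or by a direct fibrewise construction --- is a correct filling-in of the same route rather than a departure from it.
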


Following earlier authors, e.g., \cite{Henn:almost-rat,HI:complete-ganea,Iwase:counter-ganea,ISS:hom_co-h}, we paraphrase the fibrewise property (see \cite{James:over-B,James:intro-over-B}) for a fibrewise based space $r^X : X \rightarrow  B\pi_1(X)$ as `almost' property for a space $X$. Recall that $X$ is an `almost' co-$H$-space if there exists 
a map $\mu: X \rightarrow X {\vee}_B X$,  such that when composed with each projection, the identity of $X$ is obtained, where $ X {\vee}_B  X$ is the pushout of the folding map $\nabla_B:  B \vee B \rightarrow B$ and $s^X \vee s^X:  B \vee B \rightarrow X \vee X$,  $s^X$ section of the classifying map.

Our main result is the following theorem. The rest of this paper will be devoted  to prove it.  
\begin{thm}\label{thm:main}
Let $r^{X} : X \to B\pi_{1}(X)$ be a fibrewise space over $B= B\pi_{1}(X)$ with a co-action of $B$ along $r^{X}$. 
If $X$ is a connected finite complex whose almost $p$-localization is an almost co-H-space for every prime $p$, then $X$ is a co-$H$-space.
\end{thm}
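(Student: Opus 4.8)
The plan is to reduce the theorem to the simply-connected case settled in \cite{Costoya:loc_co-hopf} by passing to an auxiliary finite simply-connected complex. By \cite[Theorem~3.3]{ISS:hom_co-h}, $X$ is dominated by $B\vee D(X)$ for a finite simply-connected complex $D(X)$, which we take to be the cofibre $X/s^X(B)$ of the section. A homotopy retract of a co-H-space is a co-H-space, and $B\vee D(X)$ is a co-H-space as soon as $D(X)$ is (the space $B=B\pi_{1}(X)$ is the classifying space of a free group, hence a co-H-space, and a wedge of co-H-spaces is a co-H-space); so it suffices to prove that $D(X)$ is a co-H-space. Since $D(X)$ is finite and simply-connected, by \cite{Costoya:loc_co-hopf} it is enough to show that $\plocal{D(X)}$ is a co-H-space for every prime $p$.

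The second step is to identify $\plocal{D(X)}$ in terms of the hypothesis. The fibrewise $p$-localization $X\to\aplocal{X}$ is a map of fibrewise-pointed spaces over $B$, hence induces a map $X/s^X(B)\to\aplocal{X}/s(B)$ between simply-connected complexes. It localizes the fibre, $\widetilde{X}\to\plocal{\widetilde{X}}$; comparing the Serre spectral sequences over $B$ and passing to the cofibre sequences of the two sections shows that this induced map is a $\integer_{(p)}$-homology isomorphism. Moreover its target is $p$-local, because $B$ is a bunch of circles, so the homology of $\aplocal{X}$ — and hence that of the cofibre of its section — is assembled from the $p$-local modules $H_\ast(\plocal{\widetilde{X}})$ via the $\pi_{1}(X)$-action. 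Therefore $\aplocal{X}/s(B)\simeq\plocal{D(X)}$.

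The heart of the matter is the following claim: \emph{if a space $Y$ carries a co-action of $B$ along $r^Y$ and is an almost co-H-space, then $Y/s^Y(B)$ is a co-H-space}. Write $D(Y)=Y/s^Y(B)$. The fibrewise comultiplication $\mu\colon Y\to Y{\vee}_B Y$ is a morphism of fibrewise-pointed spaces, so it carries $s^Y(B)$ into the canonical copy of $B$ in $Y{\vee}_B Y$; collapsing that copy of $B$ identifies the quotient of $Y{\vee}_B Y$ with $D(Y)\vee D(Y)$ and turns the two fibrewise projections $Y{\vee}_B Y\to Y$ into the two wedge projections $\proj_1,\proj_2\colon D(Y)\vee D(Y)\to D(Y)$. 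Thus $\mu$ descends to a map $\bar\mu\colon D(Y)\to D(Y)\vee D(Y)$, and since the homotopies $\proj_k\circ\mu\simeq 1_Y$ ($k=1,2$) may be taken through fibrewise-pointed maps — hence through maps fixing $s^Y(B)$ — they descend to $\proj_k\circ\bar\mu\simeq 1_{D(Y)}$. As a map into a product is determined up to homotopy by its two projections, $\bar\mu$ is a comultiplication.

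Finally I would combine these. For a prime $p$, Proposition~\ref{prop:domination}(2) says $\aplocal{X}$ is a fibrewise space over $B$ with a co-action of $B$ along the classifying map of its universal covering, and by hypothesis it is an almost co-H-space; the claim yields that $\aplocal{X}/s(B)$ is a co-H-space, so $\plocal{D(X)}$ is one by the second paragraph. As this holds for every $p$, $D(X)$ is a co-H-space, and therefore $X$ is a co-H-space. I expect the main difficulty to lie in the fibrewise bookkeeping of the third paragraph together with the identification in the second: one must ensure that the comultiplication and the nullhomotopies exhibiting $\aplocal{X}$ as an almost co-H-space are genuinely fibrewise — so that they pass to the quotient by the section — and one must verify that forming the cofibre of the section commutes, up to homotopy, with fibrewise $p$-localization.
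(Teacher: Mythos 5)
Your proposal takes a completely different route from the paper's. The paper never reduces to the simply-connected theorem of \cite{Costoya:loc_co-hopf}; instead it constructs a wedge of spheres $M$ with a fibrewise rational equivalence $f\colon M\to X$ (Lemma~\ref{lem:rational}), proves a fibrewise version of Zabrodsky mixing (Proposition~\ref{prop:decomp}), and then, by induction over the homology decomposition, adjusts the fibrewise co-$H$-structures on the various $M(\overline{P},f)$ until they can be glued (Lemma~\ref{lem:key}), concluding that $X$ itself is an almost co-$H$-space and hence, via $\widetilde{X}$ and Proposition~\ref{prop:domination}, a co-$H$-space. Two pieces of your argument are sound and worth recording: the collapse argument of your third paragraph (a fibrewise \emph{pointed} comultiplication on $Y$ descends to a genuine comultiplication on $Y/s^{Y}(B)$) and the identification $\aplocal{X}/s(B)\simeq\plocal{(X/s^{X}(B))}$ both check out, provided ``almost co-$H$-space'' is read in the fibrewise pointed homotopy category so that the homotopies $\rho_{k}\circ\mu\simeq 1$ preserve the section.

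The gap is in your first paragraph. \cite[Theorem 3.3]{ISS:hom_co-h} produces a domination of $X$ by $B\vee D(X)$ for a \emph{specific} simply-connected complex $D(X)$ built from the homology decomposition; you cannot simply ``take it to be the cofibre $X/s^{X}(B)$''. What the domination gives for free goes the wrong way: collapsing $B$ in $\rho\colon B\vee D(X)\to X$ and in its section exhibits $X/s^{X}(B)$ as a retract of $D(X)$, so a co-$H$-structure on $D(X)$ would descend to $X/s^{X}(B)$ --- but you need the converse. The statement your proof actually requires, namely that $X$ is dominated by $B\vee(X/s^{X}(B))$, i.e.\ that the pinch map $X\xrightarrow{\nu}B\vee X\to B\vee X/s^{X}(B)$ admits a left homotopy inverse, is not a formal consequence of the co-action: it is exactly the point at which the $\integer\pi_{1}(X)$-module structure of the attaching maps re-enters, and that information is destroyed by collapsing the section (obstructions to a comultiplication on $X$ live in $\integer\pi_{1}(X)$-modules, and $X/s^{X}(B)$ only sees their images under the augmentation $\integer\pi_{1}(X)\to\integer$; an obstruction of the form $(1-t)\omega$ dies in the quotient but not in $X$). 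This is also why the authors' closing Remark insists that the theorem ``cannot be directly obtained'' from \cite{Costoya:loc_co-hopf}: your proof is precisely such a direct reduction, with the finite complex $X/s^{X}(B)$ substituted for the non-finite-type $\widetilde{X}$, and the substitution merely relocates the difficulty into the unproved domination. To rescue the approach you would have to show that the complex $D(X)$ of \cite{ISS:hom_co-h} is homotopy equivalent to $X/s^{X}(B)$ compatibly with $\rho$ and its section; establishing that is of essentially the same depth as the mixing argument the paper carries out.
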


Some notation is required. For a set of primes $P$, the almost $P$-localization of $r^X : X \rightarrow B$, in other words a fibrewise $P$-localization, is a map $l^B_{(P)}: X \rightarrow X_{(P)}^B$ which commutes with projections to $B$. The map
$l^B_{(P)}$ induces an isomorphism of fundamental groups and acts as a standard $P$-localization on the fibre $\widetilde X$, so $\widetilde {X_{(P)}^B} = \widetilde X_{(P)}$.

Most of the proofs in this paper follow by induction on the dimension of the space. Henceforth, it will be useful to work with the almost localization introduced by the second author in \cite{Iwase:counter-ganea}.

\section{`Almost' version of Zabrodsy Mixing}\label{sect:preliminaries}

To show the main result, we need a fibrewise version of a result of Zabrodsky \cite[Proposition 4.3.1]{Zabrodsky:Hopf-space}.
Let $\Pi$ denote the set of all primes, $P$ and $Q$ disjoint sets with $P \sqcup Q = \Pi$. We first give a lemma.

\begin{lemma}\label{fibrewisepq} Let $M = B \vee M_0$ be the wedge-sum of $B $, a bouquet of $1$-dimensional spheres, and $M_0$ a simply-connected finite CW-complex. Let $X$ be fibrewise pointed space over $B$. 
Then, $$[M, X]_B = \rm{pullback}  \big \{{[M, X^B_{(P)}]}_B \rightarrow  {[ M, X^B_{(0)}]}_B   \leftarrow {[ M, X^B_{(Q)}]}_B   \big \}$$
\end{lemma}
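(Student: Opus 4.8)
The plan is to reduce the fibrewise statement to the classical arithmetic-square (fracture) result for simply-connected complexes applied fibrewise on the fibre. First I would fix the fibrewise-pointed structure: a map $M \to X$ over $B$ is, since $M = B \vee M_0$, the same data as a section $s^X: B \to X$ of $r^X$ (already fixed) together with a based map $M_0 \to F$ from $M_0$ into the homotopy fibre of $r^X$, which by the hypothesis on $X$ is the universal cover $\widetilde X$. More precisely, because $M_0$ is simply connected, any map $M_0 \to X$ lifts uniquely (up to homotopy) through $c^X: \widetilde X \to X$, and the wedge summand $B$ contributes only the chosen section. So I would first establish a natural bijection
\[
[M, X]_B \;\cong\; [M_0, \widetilde X],
\]
and the analogous bijections for $X^B_{(P)}$, $X^B_{(Q)}$, $X^B_{(0)}$, using the identifications $\widetilde{X^B_{(P)}} = \widetilde X_{(P)}$ recorded in Section~1. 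This is the step that genuinely uses that the base is a bunch of circles (so $B\pi_1$ is a $K(\pi,1)$ with free $\pi$, giving the clean splitting) and that $M_0$ is simply connected.

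With these identifications in hand, the lemma becomes exactly the assertion that
\[
[M_0, \widetilde X] \;\cong\; \mathrm{pullback}\big\{[M_0, \widetilde X_{(P)}] \to [M_0, \widetilde X_{(0)}] \leftarrow [M_0, \widetilde X_{(Q)}]\big\}.
\]
Now $\widetilde X$ is a simply-connected complex and $M_0$ is a simply-connected finite complex, so this is the classical fracture square (Zabrodsky \cite[Proposition 4.3.1]{Zabrodsky:Hopf-space}, or the Hilton–Mislin–Roitberg/Sullivan arithmetic square) applied to the nilpotent — indeed simply-connected — target $\widetilde X$ with $P \sqcup Q = \Pi$: the natural map from $[M_0,\widetilde X]$ to the indicated pullback is a bijection. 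Strictly one should note $\widetilde X$ need not be finite, but it has finite type over each circle's worth of cells, or rather one reduces to the finite skeleta of $M_0$ and uses that $[M_0, -]$ commutes with the relevant localizations because $M_0$ is a finite complex; this is standard.

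The remaining work is to check that all four identifications $[-, -]_B \cong [M_0, \widetilde X_{(?)}]$ are natural with respect to the localization maps $l^B_{(P)}, l^B_{(Q)}, l^B_{(0)}$, so that the fibrewise comparison map is carried to the classical comparison map and bijectivity transfers. The main obstacle I anticipate is precisely this bookkeeping: one must be careful that ``maps over $B$'' versus ``based maps into the fibre'' is a genuine natural equivalence of functors and not merely a bijection of sets, since the section $s^X$ must be handled compatibly across $X$, $X^B_{(P)}$, $X^B_{(Q)}$, $X^B_{(0)}$ (the localization maps are fibrewise-pointed, so they commute with the sections, which is what makes this go through). Once naturality is in place, the lemma follows immediately from the simply-connected fracture lemma, so there is no hard homotopy-theoretic content beyond reducing to the fibre.
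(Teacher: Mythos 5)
Your proposal is correct and follows essentially the same route as the paper: reduce $[M,X]_B$ (and its localized analogues) to $[M_0,\widetilde X]$ via the wedge decomposition and unique lifting of the simply-connected $M_0$ through the universal covering, then invoke the classical fracture square for the simply-connected target $\widetilde X$ and the finite complex $M_0$ (the paper cites Bousfield--Kan 6.3(ii) rather than Zabrodsky or the Sullivan arithmetic square, but this is the same result). The naturality bookkeeping you flag is left implicit in the paper.
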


\begin{proof} Since we have the following equivalence between homotopy sets,
$${[M, X^B_{(P)}]}_B ={[B \vee M_0 , X^B_{(P)}]}_B  ={[ M_0 , X^B_{(P)}]} = [ M_0 , \widetilde {X^B_{(P)}}] = [ M_0 , \widetilde {X}_{(P)}], $$
it is equivalent to proof that 
$$[M_0, \widetilde X] = \rm{pullback}  \{{[M_0, \widetilde X_{(P)}]} \rightarrow  {[ M_0, \widetilde X_{(0)}]}   \leftarrow {[ M_0, \widetilde X_{(Q)}]}   \}.$$
This is clear by the fracture square lemma of Bousfield-Kan \cite[6.3.(ii)]{BK:localization}, since $M_0$ is finite and $\widetilde X$ is simply-connected.
\end{proof}

\begin{prop}\label{prop:decomp}
Let  $M$ satisfy the conditions of the previous lemma and $f : M \to X$ be a fibrewise pointed rational equivalence of fibrewise pointed spaces over $B=B\pi_{1}(M)=B\pi_{1}(X)$ commuting with co-actions of $B$ along the classifying maps of the universal coverings $r^M$ and $ r^X$ respectively. We then have:
\begin{enumerate}
\item\label{prop:decomp;existence}
There exists a unique fibrewise pointed space $M({P},f)$ over $B$ and fibrewise maps $\psi({P},f) : M \to M({P},f)$ and $\phi({Q},f) : M({P},f) \to X$, such that $\psi({P},f)$ is a fibrewise $P$-equivalence, $\phi({Q},f)$ is a fibrewise ${Q}$-equivalence and the following diagram is  homotopy commutative:
\begin{equation*}
\begin{diagram}
\node{M}
	\arrow[2]{e,t}{f}
	\arrow{se,b}{\psi({P},f)}
\node{}
\node{X}
\\
\node{}
\node{M({P},f)}
	\arrow{ne,b}{\phi({Q},f)}
\node{}
\end{diagram}
\end{equation*}
\item \label{prop:decomp;naturality}
The fibrewise pointed space $M({P},f)$ over $B$ and fibrewise maps $\psi({P},f)$ and $\phi({Q},f)$ are natural with respect to $f$, i.e., for a homotopy commutative square:
\begin{equation*}
\begin{diagram}
\node{M_{1}}
	\arrow[2]{e,t}{f_{1}}
	\arrow{s,t}{s}
\node{}
\node{X_{1}}
	\arrow{s,b}{t}
\\
\node{M_{2}}
	\arrow[2]{e,b}{f_{2}}
\node{}
\node{X_{2}}
\end{diagram}
\end{equation*}
with $f_{1}$ and $f_{2}$ fibrewise rational equivalences, there exists $k({P};f_{1},f_{2}) : M({P},f_{1}) \to M({P},f_{2})$ such that the following diagram is homotopy commutative:
\begin{equation*}
\begin{diagram}
\node{M_{1}}
	\arrow[2]{e,t}{\psi({P},f_{1})}
	\arrow{s,t}{s}
\node{}
\node{M({P},f_{1})}
	\arrow[2]{e,t}{\phi({Q},f_{1})}
	\arrow{s,b}{k({P};f_{1},f_{2}) }
\node{}
\node{X_{1}}
	\arrow{s,b}{t}
\\
\node{M_{2}}
	\arrow[2]{e,b}{\psi({P},f_{2})}
\node{}
\node{M({P},f_{2})}
	\arrow[2]{e,b}{\phi({Q},f_{2})}
\node{}
\node{X_{2}}
\end{diagram}
\end{equation*}
\item
If further $M$ and $X$ have the integral homology of a finite space, then so has $M(P,f)$.
\end{enumerate}
\end{prop}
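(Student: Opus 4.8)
\textbf{Construction (part~\ref{prop:decomp;existence}).} The plan is to carry out Zabrodsky's mixing construction \cite[Proposition 4.3.1]{Zabrodsky:Hopf-space} fibrewise over $B$, using two facts: fibrewise $P$-localization exists for every fibrewise pointed space over $B$ and restricts on the fibre to the ordinary $P$-localization of a simply-connected space, so $\cover{\aPlocal{X}} \simeq \Plocal{\cover{X}}$; and ordinary $P$-localization commutes with homotopy pullbacks of simply-connected spaces, applied fibrewise. Since $f$ is a fibrewise rational equivalence, $\azlocal{f} : \azlocal{M} \to \azlocal{X}$ is a fibrewise homotopy equivalence, and I would define $M(P,f)$ to be the fibrewise homotopy pullback over $B$ of
\begin{equation*}
\aPlocal{M}\ \xrightarrow{\ \aPlocal{f}\ }\ \aPlocal{X}\ \xleftarrow{\ l^B_{(P)}\ }\ X ,
\end{equation*}
with $\phi({Q},f) : M(P,f) \to X$ the projection on the second factor and $\psi({P},f) : M \to M(P,f)$ the fibrewise map into the pullback determined by $l^B_{(P)} : M \to \aPlocal{M}$, by $f : M \to X$, and by a homotopy realising $\aPlocal{f} \circ l^B_{(P)} \simeq l^B_{(P)} \circ f$ (naturality of fibrewise localization). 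All maps lie over $B$ and respect the sections, so $M(P,f)$ is a fibrewise pointed space over $B$; it also inherits a co-action of $B$ from those on $\aPlocal{M}$, $\aPlocal{X}$ and $X$ (Proposition~\ref{prop:domination}), a point I would record for later. Any two choices of the homotopy give fibrewise-homotopic $\psi({P},f)$, and $\phi({Q},f) \circ \psi({P},f) \simeq f$ by construction. Localizing the pullback square at $P$ and at $Q$ and using $(\aPblocal{X})^B_{(P)} \simeq \azlocal{X} \simeq \azlocal{M}$ and $(\aPlocal{M})^B_{(Q)} \simeq \azlocal{M}$, one reads off $M(P,f)^B_{(P)} \simeq \aPlocal{M}$ and $M(P,f)^B_{(Q)} \simeq \aPblocal{X}$, under which $\psi({P},f)$ becomes the fibrewise $P$-localization of $M$ and $\phi({Q},f)$ the fibrewise $Q$-localization of $X$; hence $\psi({P},f)$ is a fibrewise $P$-equivalence and $\phi({Q},f)$ a fibrewise $Q$-equivalence. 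For uniqueness, given another triple $(M',\psi',\phi')$, the map $(\psi')^B_{(P)}$ is a fibrewise equivalence, and $M' \to (M')^B_{(P)} \xrightarrow{\ \simeq\ } \aPlocal{M}$ agrees after $l^B_{(P)}$ with $\phi' : M' \to X$ since $(\phi')^B_{(P)} \circ (\psi')^B_{(P)} \simeq \aPlocal{f}$; this yields a fibrewise map $M' \to M(P,f)$ taking $\psi',\phi'$ to $\psi({P},f),\phi({Q},f)$, and comparing its fibrewise $P$- and $Q$-localizations — or, via Lemma~\ref{fibrewisepq}, its restriction to the fibre $\cover{M}$, where the classical uniqueness of Zabrodsky mixing of simply-connected spaces applies — shows it to be a fibrewise equivalence.

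\textbf{Naturality (part~\ref{prop:decomp;naturality}) and finiteness (part~(3)).} For naturality I would apply fibrewise $P$-localization to the given square to get $\aPlocal{s}$ and $\aPlocal{t}$; these, together with $t : X_1 \to X_2$, are compatible with the two pullback squares (the required homotopies coming from $t \circ f_1 \simeq f_2 \circ s$ and naturality of localization), so a coherent choice of homotopies produces $k({P};f_1,f_2) : M(P,f_1) \to M(P,f_2)$. Then $\phi({Q},f_2) \circ k({P};f_1,f_2) = t \circ \phi({Q},f_1)$ by construction, while $k({P};f_1,f_2) \circ \psi({P},f_1) \simeq \psi({P},f_2) \circ s$ is checked on the two factors of the pullback, where the two sides become $\aPlocal{s} \circ l^B_{(P)} \simeq l^B_{(P)} \circ s$ and $t \circ f_1 \simeq f_2 \circ s$. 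For finiteness: a fibrewise $P$-equivalence, being an isomorphism on $\pi_1(B)$ and a $P$-equivalence on the fibre, induces an isomorphism on $H_\ast(-;\Plocal{\integer})$ by comparison of Serre spectral sequences over $\pi_1(B)$; so $H_\ast(M(P,f);\Plocal{\integer}) \isomorphic H_\ast(M;\Plocal{\integer})$, and likewise $H_\ast(M(P,f);\Pblocal{\integer}) \isomorphic H_\ast(X;\Pblocal{\integer})$ and $H_\ast(M(P,f);\rational) \isomorphic H_\ast(M;\rational)$. Each of these is finitely generated and vanishes above a fixed degree since $M$ and $X$ have the homology of finite spaces, so by the arithmetic fracture square (\cite[6.3]{BK:localization}) every $H_n(M(P,f);\integer)$ is finitely generated and vanishes for $n$ large; thus $M(P,f)$ has the integral homology of a finite space.

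\textbf{Main obstacle.} I expect the hard part to be making part~\ref{prop:decomp;existence} precise rather than the homological bookkeeping: one must verify that fibrewise $P$-localization genuinely commutes with the fibrewise homotopy pullback defining $M(P,f)$, so that its $P$- and $Q$-localizations come out as claimed, and one must pin down the homotopies used when mapping into fibrewise homotopy pullbacks tightly enough that $\psi({P},f)$, the equivalence in the uniqueness clause, and $k({P};f_1,f_2)$ are canonical up to fibrewise homotopy and slot into the stated diagrams. This is precisely where James's fibrewise homotopy theory, together with the reduction to the simply-connected fibre afforded by Lemma~\ref{fibrewisepq}, is needed.
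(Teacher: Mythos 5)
Your proposal is correct and follows essentially the same route as the paper: fibrewise Zabrodsky mixing realized as a fibrewise homotopy pullback of almost localizations, with $\psi({P},f)$ and $\phi({Q},f)$ identified as a fibrewise $P$-equivalence and $Q$-equivalence by localizing the square, and with uniqueness and the compatibility $\phi\comp\psi\simeq f$ controlled by Lemma~\ref{fibrewisepq} together with Dold's theorem on fibrewise equivalences. The only real difference is which pullback you form — you mix along $\aPlocal{M} \to \aPlocal{X} \leftarrow X$, while the paper mixes along $\aPlocal{M} \to \azlocal{X} \leftarrow \aPblocal{X}$ over the almost rationalization — and these agree by the Bousfield--Kan arithmetic square for $X$ plus associativity of pullbacks, so your version merely shifts where that fracture square is invoked (from the construction itself to the identification of the localizations of the pullback).
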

\begin{proof}

We define $\alpha({P},f) = \aPnlocal{\ell}\circ\aPlocal{f} = \azlocal{f}\circ\aPnlocal{\ell} : \aPlocal{M} \to \azlocal{X}$ and $\beta({Q},f) = \aPbnlocal{\ell} : \aPblocal{X} \to \azlocal{X}$, where by $\aPnlocal{\ell} : Y \to \azlocal{Y}$ we denote the almost rationalization of a fibrewise based ${P}$-local space  $r^Y : Y \to B$. With the above maps, we obtain the following commutative diagram:
\begin{equation}\label{MZabrodsky}
\begin{diagram}
\node{\aPblocal{M}} 
 \arrow{e,t}{\aPblocal{f}}
 \arrow{s,l}{\aPbnlocal{\ell}}
\node{\aPblocal{X}}
 \arrow{e,=}
 \arrow{s,l}{\beta({Q},f)}
\node{\aPblocal{X}}
 \arrow{s,l}{\aPbnlocal{\ell}}
\\
\node{\azlocal{M}}
 \arrow{e,t}{\azlocal{f}}
\node{\azlocal{X}}
 \arrow{e,=}
\node{\azlocal{X}}
\\
\node{\aPlocal{M}}
 \arrow{e,=}
 \arrow{n,l}{\aPnlocal{\ell}}
\node{\aPlocal{M}}
 \arrow{e,b}{\aPlocal{f}}
 \arrow{n,l}{\alpha({P},f)}
\node{\aPlocal{X}}
 \arrow{n,l}{\aPnlocal{\ell}}
\end{diagram}
\end{equation}
Now, taking the universal covering spaces, we get pullback diagrams by using Bousfield-Kan's fibre square of localizations \cite[p. 127]{BK:localization}:
$$
\begin{diagram}
\node{\cover{M}}
 \arrow{e,t}{\Pblocal{\cover{\ell}}}
 \arrow{s,l}{\Plocal{\cover{\ell}}}
\node{\Pblocal{\cover{M}}}
 \arrow{s,r}{\Pbnlocal{\cover{\ell}}}
\\
\node{\Plocal{\cover{M}}}
 \arrow{e,b}{\Pnlocal{\cover{\ell}}}
\node{\zlocal{\cover{M}}}
\end{diagram}
\qquad
\begin{diagram}
\node{\cover{X}}
 \arrow{e,t}{\Pblocal{\cover{\ell}}}
 \arrow{s,l}{\Plocal{\cover{\ell}}}
\node{\Pblocal{\cover{X}}}
 \arrow{s,r}{\Pbnlocal{\cover{\ell}}}
\\
\node{\Plocal{\cover{X}}}
 \arrow{e,b}{\Pnlocal{\cover{\ell}}}
\node{\zlocal{\cover{X}}}
\end{diagram} 
$$
Then, by Theorem 6.3 in \cite{Dold:fibration}, we obtain that the following square diagrams are fibrewise homotopy equivalent to fibrewise pullback diagrams:
$$
\begin{diagram}
\node{M}
 \arrow{e,t}{\aPblocal{\ell}}
 \arrow{s,l}{\aPlocal{\ell}}
\node{\aPblocal{M}}
 \arrow{s,r}{\aPbnlocal{\ell}}
\\
\node{\aPlocal{M}}
 \arrow{e,b}{\aPnlocal{\ell}}
\node{\azlocal{M}}
\end{diagram}
\qquad
\begin{diagram}
\node{X}
 \arrow{e,t}{\aPblocal{\ell}}
 \arrow{s,l}{\aPlocal{\ell}}
\node{\aPblocal{X}}
 \arrow{s,r}{\aPbnlocal{\ell}}
\\
\node{\aPlocal{X}}
 \arrow{e,b}{\aPnlocal{\ell}}
\node{\azlocal{X}}
\end{diagram}
$$
Let the pair of maps $\hat\alpha({P},f) : M({P},f) \to \aPblocal{X}$ and $\hat\beta({Q},f) : M({P},f) \to \aPlocal{M}$ be the fibrewise pullback of $\alpha({P},f)$ and $\beta({Q},f)$ so  that $\aPlocal{M({P},f)} =\aPlocal{M}$ and $\aPblocal{M({P},f)} =\aPblocal{X}$:
$$
\begin{diagram}
\node{M({P},f)}
 \arrow{e,t}{\hat\alpha({P},f)}
 \arrow{s,l}{\hat\beta({Q},f)}
\node{\aPblocal{X}}
 \arrow{s,r}{\beta({Q},f)}
\\
\node{\aPlocal{M}}
 \arrow{e,b}{\alpha({P},f)}
\node{\azlocal{X}}
\end{diagram}
$$
Taking the fibrewise pullback of the vertical arrows in diagram (\ref{MZabrodsky}), we immediately obtain the existence of maps
 $\psi({P},f) : M \to M({P},f)$ and $\phi({Q},f) : M({P},f) \to X$ which fit within the following commutative diagram:
$$
\begin{diagram}
\node{\aPblocal{M}}
 \arrow{e,t}{\aPblocal{f}}
\node{\aPblocal{X}}
 \arrow{e,=}
\node{\aPblocal{X}}
\\
\node{M}
 \arrow{e,t}{\psi({P},f)}
 \arrow{n,l}{\aPblocal{\ell}}
 \arrow{s,l}{\aPlocal{\ell}}
\node{M({P},f)}
 \arrow{e,t}{\phi({Q},f)}
 \arrow{n,l}{\hat\alpha({P},f)}
 \arrow{s,l}{\hat\beta({Q},f)}
\node{X}
 \arrow{n,l}{\aPblocal{\ell}}
 \arrow{s,l}{\aPlocal{\ell}}
\\
\node{\aPlocal{M}}
 \arrow{e,=}
\node{\aPlocal{M}}
 \arrow{e,b}{\aPlocal{f}}
\node{\aPlocal{X}}
\end{diagram}
$$
We then have that $f$ and $\phi({Q},f)\comp\psi({P},f)$ have the same almost $P$-localization and $Q$-localization, hence they are equal by Lemma \ref{fibrewisepq}. Since $f$ is a rational equivalence, $\aPlocal{f}$ is a $Q$-equivalence and hence so is $\phi({Q},f)$.
Similarly we obtain that $\psi({P},f)$ is a ${P}$-equivalence.
Thus 1) in Proposition \ref{prop:decomp;naturality} is proved.
The verification of 2) and 3) is straightforward and we leave it to the readers.
\end{proof}

\begin{prop}\label{prop:p-local}
For any prime $p$, we have $M(\bar p,f) = M( \bar p,\aplocal{\ell}{\comp}f)$.
\end{prop}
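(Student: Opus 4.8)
The plan is to read the equality off the uniqueness clause of Proposition~\ref{prop:decomp}. Write $g = \aplocal{\ell}\comp f$, where $\aplocal{\ell}\colon X \to \aplocal{X}$ is the almost $p$-localization. First I would check that $g$ is an admissible input for the construction $M(\bar p,-)$, i.e.\ that it satisfies the hypotheses of Proposition~\ref{prop:decomp} with $P=\bar p$ and $Q=\{p\}$: since almost $p$-localization induces an isomorphism on fundamental groups, $B\pi_{1}(\aplocal{X}) = B\pi_{1}(X) = B$, so $g$ is a fibrewise pointed map over $B$; by Proposition~\ref{prop:domination}~(2) the space $\aplocal{X}$ carries a co-action of $B$ along the classifying map of its universal covering, and since both $f$ and $\aplocal{\ell}$ commute with co-actions (the latter by naturality of almost localization) so does $g$; and $g$ is a fibrewise rational equivalence, being the composite of the fibrewise rational equivalence $f$ with $\aplocal{\ell}$, which is itself a fibrewise $p$-equivalence, hence a fibrewise rational equivalence. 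Thus Proposition~\ref{prop:decomp} yields the \emph{unique} triple $\big(M(\bar p,g),\ \psi(\bar p,g),\ \phi(p,g)\big)$ consisting of a fibrewise pointed space over $B$, a fibrewise $\bar p$-equivalence $\psi(\bar p,g)\colon M \to M(\bar p,g)$, and a fibrewise $p$-equivalence $\phi(p,g)\colon M(\bar p,g)\to \aplocal{X}$, with $\phi(p,g)\comp\psi(\bar p,g) = g$.

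Next I would exhibit $M(\bar p,f)$ itself as the underlying space of such a triple for $g$. Take the fibrewise maps $\psi(\bar p,f)\colon M \to M(\bar p,f)$ and $\aplocal{\ell}\comp\phi(p,f)\colon M(\bar p,f)\to\aplocal{X}$. The first is a fibrewise $\bar p$-equivalence by Proposition~\ref{prop:decomp}. For the second, specializing the construction in the proof of Proposition~\ref{prop:decomp} to $P=\bar p$, $Q=\{p\}$, the last commutative diagram there reads $\aplocal{\ell}\comp\phi(p,f) = \hat\alpha(\bar p,f)$, and $\hat\alpha(\bar p,f)$ is precisely the fibrewise $p$-localization map of $M(\bar p,f)$ (so that $\aplocal{M(\bar p,f)} = \aplocal{X}$); in particular it is a fibrewise $p$-equivalence. (Equivalently, $\phi(p,f)$ and $\aplocal{\ell}$ are both fibrewise $p$-equivalences, hence so is their composite.) Finally $\big(\aplocal{\ell}\comp\phi(p,f)\big)\comp\psi(\bar p,f) = \aplocal{\ell}\comp\big(\phi(p,f)\comp\psi(\bar p,f)\big) = \aplocal{\ell}\comp f = g$. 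Hence $\big(M(\bar p,f),\ \psi(\bar p,f),\ \aplocal{\ell}\comp\phi(p,f)\big)$ is a triple of exactly the kind that the uniqueness clause of Proposition~\ref{prop:decomp} attaches to $g$; it therefore coincides with $\big(M(\bar p,g),\ \psi(\bar p,g),\ \phi(p,g)\big)$, and in particular $M(\bar p,f) = M(\bar p,g) = M(\bar p,\aplocal{\ell}\comp f)$.

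I do not expect a genuine obstacle here: once the hypotheses of Proposition~\ref{prop:decomp} are checked for $g$ and one reads off $\aplocal{\ell}\comp\phi(p,f) = \hat\alpha(\bar p,f)$ from the last diagram in its proof, the conclusion is formal. The one point deserving care is the precise meaning of ``unique'' in Proposition~\ref{prop:decomp}: it must be uniqueness of the whole triple (the fibrewise pointed space together with the two fibrewise maps) subject to the $\bar p$- and $p$-equivalence conditions and to the factorization of $g$; granting that reading, the argument above concludes. Should one prefer to avoid invoking uniqueness directly, an alternative is to apply the naturality clause of Proposition~\ref{prop:decomp} to the homotopy commutative square with horizontal maps $f$ and $g$ and vertical maps $1_{M}$ and $\aplocal{\ell}$, obtaining a fibrewise map $k(\bar p;f,g)\colon M(\bar p,f)\to M(\bar p,g)$; from $k(\bar p;f,g)\comp\psi(\bar p,f)\simeq\psi(\bar p,g)$ and $\phi(p,g)\comp k(\bar p;f,g)\simeq\aplocal{\ell}\comp\phi(p,f)$ one checks, using $2$-out-of-$3$ for $\bar p$- and for $p$-equivalences, that $k(\bar p;f,g)$ is simultaneously a fibrewise $\bar p$-equivalence and a fibrewise $p$-equivalence, hence a fibrewise homotopy equivalence.
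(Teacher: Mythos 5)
Your proof is correct and follows essentially the same route as the paper: the authors likewise observe that $M \xrightarrow{\psi(\bar p,f)} M(\bar p,f) \xrightarrow{\aplocal{\ell}\comp\phi(p,f)} \aplocal{X}$ exhibits a $\bar p$-equivalence followed by a $p$-equivalence factoring $\aplocal{\ell}\comp f$ (the latter composite being a $p$-equivalence since $\aplocal{\ell}$ is), and then invoke the uniqueness clause of Proposition~\ref{prop:decomp}. Your additional verification of the hypotheses for $g=\aplocal{\ell}\comp f$ and the alternative argument via the naturality clause are sound elaborations of details the paper leaves implicit.
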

\begin{proof}
By the unique existence in \ref{prop:decomp;existence}) of Proposition \ref{prop:decomp}, the proposition follows  from the diagram
$\begin{diagram}
\node{M}
	\arrow{e,t}{\psi(\bar p,f)}
\node{M(\bar p,f)}
	\arrow{e,t}{\aplocal{\ell}{\comp}\phi(p,f)}
\node{\aplocal{X},}
\end{diagram}$
since $\aplocal{\ell}$ is a $ p$-equivalence.
\end{proof}

We now give a result that will be used in the proof of the main theorem.
\begin{prop}\label{prop:intermediatecoH} Let $f : M \to X$ be a fibrewise pointed rational equivalence satisfying the conditions of the previous proposition. If  $f$ is a fibrewise co-$H$-map, then  $\psi ( P, f): M \rightarrow M (P, f)$ is a fibrewise co-$H$-map.
\end{prop}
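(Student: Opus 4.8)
The plan is to exploit the universal/natural construction of $M(P,f)$ from Proposition~\ref{prop:decomp}, together with the fact that a co-$H$-structure is exactly a map to the fibrewise wedge whose composite with each projection is the identity. First I would observe that forming the fibrewise wedge $(-)\vee_B(-)$ commutes with almost $P$-localization, almost $Q$-localization and almost rationalization on fibrewise-pointed spaces over $B$; this is because $\vee_B$ is the pushout of $\nabla_B: B\vee B\to B$ along $s\vee s$, and localization preserves such pushouts (being a left adjoint-type construction on the fibre, where it reduces to the ordinary wedge of simply-connected finite complexes, to which the Bousfield--Kan fracture/fibre-square results of Lemma~\ref{fibrewisepq} apply). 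Consequently, applying the construction $M(P,-)$ of Proposition~\ref{prop:decomp} to the fibrewise rational equivalence $f\vee_B f : M\vee_B M\to X\vee_B X$ yields $(M\vee_B M)(P,f\vee_B f) = M(P,f)\vee_B M(P,f)$, with $\psi(P,f\vee_B f)=\psi(P,f)\vee_B\psi(P,f)$ and similarly for $\phi$.

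Next I would set up the naturality square of Proposition~\ref{prop:decomp}(\ref{prop:decomp;naturality}) twice. Let $\mu^M: M\to M\vee_B M$ be the given fibrewise comultiplication with $f\vee_B f\circ\mu^M \simeq \mu^X\circ f$ (this is the hypothesis that $f$ is a fibrewise co-$H$-map). Feeding the homotopy-commutative square with horizontal maps $f, f\vee_B f$ and vertical maps $\mu^M, \mu^X$ into the naturality statement produces a map
$$k(P;f,f\vee_B f): M(P,f)\to (M\vee_B M)(P,f\vee_B f)=M(P,f)\vee_B M(P,f)$$
making the relevant diagram homotopy commute; in particular $k\circ\psi(P,f)\simeq \psi(P,f\vee_B f)\circ\mu^M = (\psi(P,f)\vee_B\psi(P,f))\circ\mu^M$. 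I would then \emph{define} $\mu^{M(P,f)} := k(P;f,f\vee_B f)$ and claim this is the desired fibrewise comultiplication on $M(P,f)$.

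It remains to check that $\mu^{M(P,f)}$ composed with each fibrewise projection $q_i: M(P,f)\vee_B M(P,f)\to M(P,f)$ is homotopic to the identity. Here I would apply naturality a second time, or rather argue by uniqueness via Lemma~\ref{fibrewisepq}: the two maps $q_i\circ\mu^{M(P,f)}$ and $1_{M(P,f)}$ agree after almost $P$-localization (there $M(P,f)$ becomes $\aPlocal{M}$ and the statement reduces to the already-known fibrewise co-$H$ identity for a $P$-local space, using that $q_i\circ\psi(P,f\vee_B f)$ corresponds to $\psi(P,f)$) and after almost $Q$-localization (there $M(P,f)$ becomes $\aPblocal{X}$, and $f$ being a rational, hence $Q$-local, equivalence transports the co-$H$ identity from the co-$H$ structure that $X$ inherits on its $Q$-localization — more precisely $q_i\circ\mu^{M(P,f)}$ $Q$-localizes to $q_i\circ(X\to X\vee_B X)$ pulled back along $\phi(Q,f)$). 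Since $M(P,f)$ satisfies the hypotheses of Lemma~\ref{fibrewisepq} (it has the form $B\vee M_0$ up to the fibrewise homotopy type governing homotopy classes of maps out of it, being built from $M$ and $X$ which do), agreement $P$-locally and $Q$-locally forces $q_i\circ\mu^{M(P,f)}\simeq 1_{M(P,f)}$.

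The main obstacle I anticipate is the first step: verifying carefully that $M(P,-)$ applied to $f\vee_B f$ really is $M(P,f)\vee_B M(P,f)$, i.e.\ that the pullback-of-localizations construction defining $M(P,\cdot)$ commutes with the fibrewise wedge. This needs the finiteness and simple-connectivity of the fibre $M_0$ so that $\widetilde{M\vee_B M}$ has the right form ($M_0\vee M_0$ rather than something with a more complicated fundamental-group action), and it needs the compatibility of Dold's fibrewise pullback recognition (Theorem~6.3 of \cite{Dold:fibration}) with the pushout $\vee_B$. Once that identification is in hand the rest is a diagram chase plus one invocation of the fracture Lemma~\ref{fibrewisepq}; I would also double-check the coherence of the homotopy in the naturality square so that the composite identities, not merely their $P$- and $Q$-localizations, are the ones being compared.
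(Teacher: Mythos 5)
Your proposal is correct and follows essentially the same route as the paper: both identify $M(P,f)\vee_B M(P,f)$ with the mixing of $f\vee_B f$, use the uniqueness/naturality of Proposition~\ref{prop:decomp} to induce the comultiplication on $M(P,f)$, and verify the counit conditions by comparing almost $P$- and $Q$-localizations. The only cosmetic difference is that the paper first shows $\rho_k^{M(P,f)}\circ\nu_{M(P,f)}$ is a fibrewise homotopy equivalence via a fibre-homology argument and Dold's theorem before killing the difference with the identity by the same $P$/$Q$ fracture you invoke directly.
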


\begin{proof} We consider the following diagram, where $\rho_{k}^{Y}$ is the projection onto $k$-th factor.
Then, the composition of the vertical arrows on the left, and the composition of the vertical arrows on the right, are the identity of $M$ and $X$, respectively.
The existence of the dotted arrow is given by \ref{prop:decomp;existence}) in Proposition \ref{prop:decomp}:
$$
\begin{diagram}
\node{M}
	\arrow[2]{e,t}{\psi({P},f)}
	\arrow{s,t}{\nu_M}
\node{}
\node{M({P},f)}
	\arrow[2]{e,t}{\phi({Q},f )}
	\arrow{s,b,..}{\nu_{M(P,f)}}
\node{}
\node{X}
	\arrow{s,b}{\nu_X}
\\
\node{M\vee_B M}
	\arrow[2]{e,b}{\psi({P},f) \vee \psi({P},f)  }
	\arrow{s,l}{\rho_{k}^{M}}
\node{}
\node{M({P},f) \vee_B M({P},f)}
	\arrow[2]{e,b}{\phi({Q},f) \vee \phi({Q},f) }
	\arrow{s,r}{\rho_{k}^{M({P},f)}}
\node{}
\node{X\vee_B X}
\arrow{s,r}{\rho_{k}^{X}}
\\
\node{M}
	\arrow[2]{e,t}{\psi({P},f)}
	\node{}
\node{M({P},f)}
	\arrow[2]{e,t}{\phi({Q},f )}
	\node{}
\node{X}
	\end{diagram}
$$
In order to see that ${M(P,f)}$ is a fibrewise co-$H$-space, it is enough to show that $\rho_{k}^{M({P},f)} \circ\nu_{M({P},f)}$ is a fibrewise homotopy equivalence for $k=1, 2$.
By the commutativity of the diagram, we can easily see that $\rho^{M({P},f)}_k\circ \nu_{M({P},f)}$ induces both a homology $\mathbb{Z}_{(P)}$-equivalence and a homology $\mathbb{Z}_{(Q)}$-equivalence on the fibre.
Since the fibre of $M(P,f) \to B$ is simply-connected, $\rho_{k}^{M({P},f)}\circ \nu_{M({P},f)}$ induces a homotopy equivalence on the fibre.
Then by \cite[Theorem 6.3]{Dold:fibration}, we can conclude that $\rho_{k}^{M({P},f)}\circ \nu_{M({P},f)}$ is a fibrewise homotopy equivalence.
Thus $M(P,f)$ is an almost co-H-space and the difference between $\rho_{k}^{M({P},f)} \circ \nu_{M({P},f)}$ and the identity of $M(P,f)$ is defined as a fibrewise map $d : M(P,f) \to M(P,f)$.
Again using $P$ and $Q$-localization, we see that $d$ is trivial and hence $\rho_{k}^{M({P},f)}\circ \nu_{M({P},f)}$ is homotopic to the identity.
Thus $\psi(P,f) : M \to M(P,f)$ is a fibrewise co-H-map.
\end{proof}

From now on, we fix a space $X$ which is a finite complex with  a co-action of $B=B\pi_1(X)$ along $r^{X} : X \to B$ the classifying map of the universal covering of $X$.
 Hence $X$ is a fibrewise pointed space as we mentionned in Section \ref{fundamentals}.
By \cite{Iwase:counter-ganea}, a co-H-space $X$ has a homology decomposition $\{X_{i}\,;\,i\geq1\}$ such that
\begin{equation*}
B = X_{1} \subseteq X_{2} \subseteq \cdots \subseteq X_{n} = X,
\end{equation*}
together with a cofibration sequences
$
S_{i} \xrightarrow{h_{i}} X_{i} \hookrightarrow X_{i+1}, \ i \geq 1
$, 
where $S_{i}$ stands for a Moore space of type $(H_{i+1}(X),i)$.
\section{Fibrewise co-H-structures on almost localizations}
By the assumption on a finite complex $X$, $\aplocal{X}$ is an almost $p$-local co-H-space, which also implies that $\azlocal{X}$ is an almost rational co-H-space.
An almost rational co-H-space $\azlocal{X}$ is a wedge-sum of $B$ and  finitely many rational spheres of dimension $\geq 2$ \cite{Henn:almost-rat}. Hence the $k$'-invariants are all of finite order
and we can prove the following lemma.
\begin{lemma}\label{lem:rational}
There is a fibrewise pointed space $M=M(X)$ which is a wedge-sum of a finite number of spheres and, an almost rational equivalence $f = f(X) : M \to X$ which satisfies, for any prime $p$, that there are fibrewise co-$H$-structures on $M$ such that $\aplocal{\ell}{\comp}f : M \to \aplocal{X}$ is a fibrewise co-$H$-map.
\end{lemma}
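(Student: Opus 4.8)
The plan is to build $M$ and $f$ by simultaneously resolving the rationalization of $X$ and, prime by prime, the $p$-local homology decomposition of $\aplocal{X}$, then glue everything together. First I would observe that since $\azlocal X$ is, by Henn's theorem, a wedge of $B$ with finitely many rational spheres, there is an obvious candidate: take $M_\rational$ to be a wedge of integral spheres (one for each rational sphere summand, of the same dimension, plus the summand $B$) together with an almost rationalization $g : M_\rational \to \azlocal X$ that is a fibrewise co-$H$-map in the trivial way (a wedge of spheres over $B$ is an almost co-H-space and $g$ respects the co-multiplications up to the finite-order $k$-invariants). The point of choosing a wedge of spheres is precisely that all the $k$-invariants of $\azlocal X$ vanish rationally, so one can lift $g$ to an almost rational equivalence $f : M \to X$ with $M$ still a wedge of spheres, by obstruction theory up the homology (Postnikov-type) tower of $X$: the obstructions live in groups that are killed after rationalizing because $X$ is finite and the relevant $k$-invariants are torsion.

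Next I would handle an individual prime $p$. Here the hypothesis that $\aplocal X$ is an almost co-H-space is exactly what is needed: by the remark at the end of Section~2 (the homology decomposition of a co-H-space from \cite{Iwase:counter-ganea}, applied fibrewise and $p$-locally), $\aplocal X$ has a fibrewise homology decomposition whose attaching maps of Moore spaces are controlled. I would then want to promote the co-$H$-structure on $\aplocal X$ to one that is compatible with a co-$H$-structure on $M$ via $\aplocal{\ell}\comp f$. The mechanism is Proposition~\ref{prop:intermediatecoH} together with Proposition~\ref{prop:p-local}: the intermediate space $M(\bar p, f)$ is precisely the Zabrodsky mixing of $M$ at $p$ with $X$ away from $p$, and $M(\bar p, f) = M(\bar p, \aplocal\ell\comp f)$. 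Since $\aplocal X$ is an almost co-H-space and $f$ is a fibrewise rational equivalence, one transports comultiplications through the pullback square defining $M(\bar p, f)$: a comultiplication on $\aplocal X$ and the (already constructed) rational comultiplication on $M$ that agree after rationalization assemble, by Lemma~\ref{fibrewisepq} applied to $M\vee_B M$, into a comultiplication on $M(\bar p, f)$, hence pulled back to one on $M$ for which $\aplocal\ell\comp f$ is a fibrewise co-H-map.

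The remaining step is to make these prime-by-prime co-$H$-structures on $M$ coherent. Because $M$ is a wedge of spheres, the set of fibrewise comultiplications on $M$ and the relevant compatibility conditions are governed by homotopy groups of wedges of spheres over $B$, which are finitely generated; only finitely many primes $p$ are "bad" in the sense that the $p$-local comultiplication differs from the generic (rational) one, so one only has to reconcile finitely many choices. Concretely I would fix the rational comultiplication $\mu_\rational$ on $M$ once and for all, and for each bad prime $p$ choose $\mu_p$ as above; since $\mu_p$ and $\mu_\rational$ agree rationally and the comultiplications at different bad primes involve disjoint torsion, an arithmetic-fracture/pullback argument (again Lemma~\ref{fibrewisepq}, now for the space of sections of the fibrewise $X\vee_B X$) produces a single integral comultiplication $\mu$ on $M$ whose $p$-localization is $\mu_p$ for every $p$. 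With that $\mu$, the map $\aplocal\ell\comp f$ is a fibrewise co-H-map for every prime $p$ by construction, proving the lemma.

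The main obstacle, I expect, is the coherence step: one must be careful that "the comultiplication on $M$ induced at $p$" is well-defined and that the various choices can genuinely be fit into a single arithmetic pullback, rather than merely pairwise-compatibly. This requires knowing that the fibrewise co-$H$-structures form a set with a fracture square — i.e. that $\mathrm{coH}_B(M) = \mathrm{pullback}\{\mathrm{coH}_B(\aPlocal M)\to \mathrm{coH}_B(\azlocal M)\leftarrow \mathrm{coH}_B(\aPblocal M)\}$ over all primes — which follows from Lemma~\ref{fibrewisepq} but must be invoked with the correct finite-type hypotheses on $M\vee_B M$. The lifting step from $\azlocal X$ to $X$ producing a wedge of spheres is a secondary point that needs the finiteness of $X$ and the finite order of the $k$-invariants, both already available.
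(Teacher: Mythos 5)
Your overall shape (a wedge of spheres, a rational equivalence obtained because the $k'$-invariants of $X$ are torsion, then a prime-by-prime adjustment of comultiplications) matches the paper, but the central step of your second stage has a genuine gap. Having fixed $f$ once and for all, you claim that the given comultiplication $\nu_p$ on $\aplocal{X}$ and the standard comultiplication on $M$ ``agree after rationalization'' and therefore assemble into a comultiplication on $M(\bar p,f)$ which can then be ``pulled back to one on $M$.'' Neither half of this works as stated. First, there is no reason for the two structures to agree rationally: the discrepancy on the top Moore space is measured by a difference map $D_{i+1}(p) : \Sigma S^0_i \to \LoopB{\aplocal{X_i}}\ast_B\LoopB{\aplocal{X_i}}$, and since $f$ is only a rational equivalence this class need not lift along $f$ to $\LoopB{M_i}\ast_B\LoopB{M_i}$ --- only some multiple $a_{i+1}(p)D_{i+1}(p)$ does. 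The paper's proof is forced to go back and \emph{modify $f$ itself}, rescaling it on $\Sigma S^0_i$ by a common multiple $a_{i+1}$ of the $a_{i+1}(p)$ over the finitely many bad primes, inductively over the homology decomposition; your two-stage plan (fix $f$ by obstruction theory, then transport comultiplications) has no room for this rescaling, and without it the transported structure on $M$ simply does not exist. Second, Proposition~\ref{prop:intermediatecoH} runs in the opposite direction from the way you use it: it \emph{assumes} $f$ is a fibrewise co-$H$-map and concludes that $\psi(P,f)$ is one; it does not manufacture a co-$H$-structure on $M$ from one on $\aplocal{X}$. (Also note $M(\bar p,f)$ agrees with $M$ \emph{away} from $p$ and with $X$ \emph{at} $p$, the reverse of what you wrote, and $\psi(\bar p,f)$ is only a $\bar p$-equivalence, so there is no map along which to pull a structure on $M(\bar p,f)$ back to an integral structure on $M$.)

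Your final coherence step is also miscalibrated: the lemma only asserts that for each prime $p$ there is \emph{some} co-$H$-structure on $M$ (depending on $p$) making $\aplocal{\ell}\comp f$ a co-$H$-map, and the paper deliberately postpones the reconciliation of these $p$-dependent structures to Lemma~\ref{lem:key}, where it is done two prime sets at a time by an $ns+mt=1$ argument rather than by a global fracture square. Attempting to produce a single integral $\mu$ whose localization is $\mu_p$ for all $p$ at this stage both overshoots the statement and rests on the unjustified claim that the differences $\mu_p - \mu_{\rational}$ at distinct bad primes ``involve disjoint torsion''; these differences live in $[M, \LoopB{M}\ast_B\LoopB{M}]_B$ and are merely torsion classes, with no a priori control on which primes divide them.
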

\begin{proof}
We construct $M$ and $f$ by induction on the homological dimension of $X$.
When $X=X_{1}$, we have nothing to do.
So we may assume that $X=X_{i+1}$ and that we have constructed $M_{i}=M(X_{i})$ and a fibrewise rational equivalence $f_{i}=f(X_{i}) : M_{i} \to X_{i}$ satisfying the lemma.
Let $d_{i}$ be the order of the $k$'-invariant $h_{i} : S_{i} \to X_{i}$ and $M_{i+1} = M_{i} \vee {\Sigma}S^{0}_{i}$ where $S^{0}_{i} \subseteq S_{i}$ is the Moore space of type $(H_{i+1}(X;\integer)/\textrm{torsion},i)$.
We denote the multiplication by $d_{i}$ by $d_{i} : S_{i} \to S_{i}$.
Let $\nu_{p} : \aplocal{X} \to \aplocal{X} \vee_{B} \aplocal{X}$ be the given co-$H$-structure on $\aplocal{X}$ and $\nu^{M}_{i} : M_{i} \to M_{i} \vee_{B} M_{i}$ the co-$H$-structure such that $\aplocal{\ell}{\comp}f_{i}$ is a fibrewise co-$H$-map.
Since $\aplocal{(X_{i})}$ gives the homology decomposition of $\aplocal{X}$, $\nu_{p}$ induces a fibrewise co-$H$-structure $\nu^{i}_{p}$ on $\aplocal{(X_{i})}$ by the arguments of \cite{Iwase:counter-ganea}.
Recall that the $k$'-invariants are all of finite order, hence $h_{i}{\comp}d_{i}\vert_{S^{0}_{i}} = \ast$ obtaining the following commutative diagrams 
$$
\begin{diagram}
\node{S^{0}_{i}}
	\arrow{e,t}{\ast}
	\arrow{s,t}{d_{i}\vert_{S^{0}_{i}}}
\node{M_{i}}
	\arrow{s,b}{f_{i}}
\node{M_{i}}
	\arrow{s,t}{\aplocal{\ell}{\comp}f_{i}}
	\arrow{e,t}{\nu^{M}_{i}}
\node{M_{i} \vee_{B} M_{i}}
	\arrow{s,b}{\aplocal{\ell}{\comp}f_{i} \vee \aplocal{\ell}{\comp}f_{i}}
\\
\node{S_{i}}
	\arrow{e,b}{h_{i}}
\node{X_{i}}
\node{\aplocal{(X_{i})}}
	\arrow{e,b}{\nu^{i}_{p}}
\node{\aplocal{(X_{i})} \vee_{B} \aplocal{(X_{i})}}
\end{diagram}
$$
By taking cofibres of the horizontal arrows of the left-hand-side square, we get the following commutative diagram:
\begin{equation*}
\begin{diagram}
\node{M_{i}}
	\arrow{s,t}{f_{i}}
	\arrow{e,J}
\node{M_{i+1}}
	\arrow{s,b}{f'_{i+1}}
	\arrow{e}
\node{{\Sigma}S^{0}_{i}}
	\arrow{s,b}{{\Sigma}d^{0}_{i}\vert_{{\Sigma}S^{0}_{i}}}
\\
\node{X_{i}}
	\arrow{e,J}
\node{X_{i+1}}
	\arrow{e}
\node{{\Sigma}S_{i},}
\end{diagram}
\end{equation*}
where $M_{i+1} = M_{i} \vee {\Sigma}S^{0}_{i}$ and $f'_{i+1}$ is a rational equivalence, since  $f_{i}$ and ${\Sigma}d^{0}_{i}\vert_{{\Sigma}S^{0}_{i}}$ are so.

Let $\nu'^{M}_{i+1}$ be a fibrewise co-$H$-structure defined by $\nu'^{M}_{i+1}\vert_{M_{i}}=\nu^{M}_{i}$ and $\nu'^{M}_{i+1}\vert_{{\Sigma}S^{0}_{i}} : {\Sigma}S^{0}_{i} \overset{\nu^{S}_{i}}\to {\Sigma}S^{0}_{i}\vee{\Sigma}S^{0}_{i} \subseteq M_{i+1} \vee_{B} M_{i+1}$, where $\nu^{S}_{i}$ denotes the standard co-$H$-structure of ${\Sigma}S^{0}_{i}$.
$$
\begin{diagram}
\node{M_{i+1}}
	\arrow{s,t}{\aplocal{\ell}{\comp}f'_{i+1}}
	\arrow{e,t}{\nu'^{M}_{i+1}}
\node{M_{i+1} \vee_{B} M_{i+1}}
	\arrow{s,b}{\aplocal{\ell}{\comp}f'_{i+1} \vee_{B} \aplocal{\ell}{\comp}f'_{i+1}}
\\
\node{\aplocal{X_{i+1}}}
	\arrow{e,b}{\nu^{i+1}_{p}}
\node{\aplocal{X_{i+1}} \vee_{B} \aplocal{X_{i+1}}}
\end{diagram}
$$
The difference between $(\aplocal{\ell}{\comp}f'_{i+1} \vee_{B} \aplocal{\ell}{\comp}f'_{i+1}){\comp}\nu'^{M}_{i+1}$ and $\nu^{i+1}_{p}{\comp}\aplocal{\ell}{\comp}f'_{i+1}$ is given by a map $ D_{i+1}(p) : {\Sigma}S^{0}_{i} \to \LoopB{\aplocal{X_{i}}}{\ast}_{B}\LoopB{\aplocal{X_{i}}} \subseteq \LoopB{\aplocal{X_{i+1}}}{\ast}_{B}\LoopB{\aplocal{X_{i+1}}}$.
Since $f'_{i+1}$ is a rational equivalence, a multiple $a_{i+1}(p)D_{i+1}(p)$ can be pulled back to $\LoopB{M_{i}}{\ast}_{B}\LoopB{M_{i}}$.
Now, the $k$'-invariants are  of finite order so $X_{i+1}$ and $M_{i+1}$ have the same almost $p$-localization except for primes in $P$, a finite set of primes.
Let $a_{i+1}$ be the multiple of $a_{i+1}(p)$'s for all $p \in {P}$.
Let $f_{i+1} : M_{i+1} \to X_{i+1}$ be a map  defined by $f_{i+1}\vert_{M_{i}}=f_{i}$ and by $f_{i+1}\vert_{{\Sigma}S^{0}_{i}}=a_{i+1}f'_{i+1}\vert_{{\Sigma}S^{0}_{i}} : {\Sigma}S^{0}_{i} \to X_{i+1}$.
Then we choose a map $D^{M}_{i+1}(p) : {\Sigma}S^{0}_{i} \to \LoopB{M_{i}}{\ast}_{B}\LoopB{M_{i}}$ as a pull-back of $a_{i+1}D_{i+1}(p)$ onto $\LoopB{M_{i}}{\ast}_{B}\LoopB{M_{i}}$.
The co-$H$-structure defined by $\nu^{M}_{i+1}\vert_{M_{i}}=\nu^{M}_{i}$ and by $\nu^{M}_{i+1}\vert_{{\Sigma}S^{0}_{i}}=\nu'^{M}_{i+1}\vert_{{\Sigma}S^{0}_{i}}+D^{M}_{i+1} : {\Sigma}S^{0}_{i} \to M_{i+1} \vee_{B} M_{i+1}$,  together with  $\nu^{p}_{i+1}$ verify that  $\aplocal{\ell}{\comp}f_{i+1}$ is a fibrewise co-$H$-map at any prime $p \in {P}$.
For a prime $q \not\in {P}$, we may define comultiplication of $\aqlocal{(X_{i+1})}$ by that of $\aqlocal{(M_{i+1})}$, since $\aqlocal{(X_{i+1})}=\aqlocal{(M_{i+1})}$.
Hence the induction holds and we obtain the lemma.
\end{proof}
\section{Proof of Theorem \ref{thm:main}}\label{sect:proof-main}

Let ${P}_{1}$ and ${P}_{2}$ be disjoint sets of primes.
Then we have a commutative diagram 
\begin{equation}\label{diag:pushout}
\begin{diagram}
\node{M}
	\arrow[2]{e,t}{\psi({\overline P}_{1},f)}
	\arrow{s,t}{\psi({\overline P}_{2},f)}
\node{}
	\arrow{e}
\node{M({\overline P}_{1},f)}
	\arrow{s,b}{\psi({\overline P}_{1}{\cap}{\overline P}_{2},\phi({ P}_{1},f))}
\\
\node{M({\overline P}_{2},f)}
	\arrow[2]{e,b}{\psi({\overline P_1}{\cap}{\overline P}_{2},\phi({P}_{2},f))}
\node{}
	\arrow{e}
\node{M({\overline P}_{1}{\cap}{\overline P}_{2},f),}
\end{diagram}
\end{equation}
where the vertical arrows are $ P_1$-equivalences and the horizontal arrows are $ P_2$-equivalences. 

\begin{prop}
Assume that $M$ and the spaces $M(\overline P_i,f)$ for $i=1, 2$ admit fibrewise co-$H$-structures $\nu, \, \nu^{P_i}$ for  $i=1, 2$, respectively,  such that $\psi(\overline P_i,f)$ are  fibrewise co-$H$-maps, for $i=1, 2$. Then, 
$M ({\overline P}_{1}{\cap} {\overline P}_{2}, f)$ is a fibrewise co-$H$-space $(P_1 \cup P_2)$-equivalent to $X$.
\end{prop}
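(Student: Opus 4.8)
The plan is to combine the pushout diagram (\ref{diag:pushout}) with the naturality and localization results already established. First I would show that $M({\overline P}_1 \cap {\overline P}_2, f)$ carries a fibrewise co-$H$-structure. The square (\ref{diag:pushout}) exhibits $M({\overline P}_1 \cap {\overline P}_2, f)$ as (fibrewise homotopy equivalent to) the double mapping cylinder of the two maps $\psi({\overline P}_1 \cap {\overline P}_2, \phi(P_1,f))$ and $\psi({\overline P}_1 \cap {\overline P}_2, \phi(P_2,f))$ out of $M({\overline P}_1,f)$ and $M({\overline P}_2,f)$; more usefully, by Lemma \ref{fibrewisepq} applied with $P = P_1$, $Q = P_2$, the relevant fibrewise homotopy set over $B$ is the pullback of the $P_1$-local and $P_2$-local data over the rational data. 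So to build a comultiplication $\nu^{12} : M({\overline P}_1 \cap {\overline P}_2, f) \to M({\overline P}_1 \cap {\overline P}_2, f) \vee_B M({\overline P}_1 \cap {\overline P}_2, f)$ it suffices to produce compatible comultiplications after $P_1$-localization and after $P_2$-localization whose rationalizations agree. By Proposition \ref{prop:p-local} (iterated over primes in $P_i$) and Proposition \ref{prop:intermediatecoH}, the map $\psi({\overline P}_1 \cap {\overline P}_2, \phi(P_2,f))$ is a fibrewise co-$H$-map for the structure $\nu^{P_2}$ on its source, and symmetrically for the other map; localizing $\nu^{P_2}$ at $P_1$ and $\nu^{P_1}$ at $P_2$ gives two co-$H$-structures on $M({\overline P}_1 \cap {\overline P}_2, f)$ localized appropriately. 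Their rationalizations both coincide with the rationalization of $\nu$ transported along $\psi$ (since everything in sight is a rational equivalence and $M(\overline P_i,f)$ has the same rationalization as $M$), so by Lemma \ref{fibrewisepq} they glue to a genuine fibrewise comultiplication on $M({\overline P}_1 \cap {\overline P}_2, f)$. The co-$H$ identities $\rho_k \circ \nu^{12} \simeq 1$ are checked as in the proof of Proposition \ref{prop:intermediatecoH}: the composite induces a homology $\integer_{(P_1)}$- and $\integer_{(P_2)}$-equivalence on the (simply-connected) fibre, hence a fibrewise homotopy equivalence by \cite[Theorem 6.3]{Dold:fibration}, and the difference obstruction vanishes after both localizations, hence vanishes.

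Next I would record the $(P_1 \cup P_2)$-equivalence to $X$. By construction in Proposition \ref{prop:decomp}, $\psi(\overline P_i, f) : M \to M(\overline P_i,f)$ is a $\overline P_i$-equivalence and $\phi(P_i,f) : M(\overline P_i,f) \to X$ is a $P_i$-equivalence; composing with the maps in (\ref{diag:pushout}) and using that $\psi({\overline P}_1 \cap {\overline P}_2, \phi(P_1,f))$ is a $P_2$-equivalence while $\psi({\overline P}_1 \cap {\overline P}_2, \phi(P_2,f))$ is a $P_1$-equivalence, one gets that the induced map $M({\overline P}_1 \cap {\overline P}_2, f) \to X$ (well-defined up to homotopy since the two composites $M(\overline P_i,f) \to X$ agree on $M$ and $f$ is a rational equivalence, so again Lemma \ref{fibrewisepq} applies) is simultaneously a $P_1$-equivalence and a $P_2$-equivalence, i.e.\ a $(P_1 \cup P_2)$-equivalence on the fibre.

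The main obstacle I expect is the bookkeeping needed to verify that the two localized comultiplications really do have the \emph{same} rationalization, so that Lemma \ref{fibrewisepq} can be invoked — one must track that $\nu^{P_1}$ and $\nu^{P_2}$ were built (via Lemma \ref{lem:rational} and Proposition \ref{prop:intermediatecoH}) from the \emph{same} underlying co-$H$-structure $\nu$ on $M$, so that after rationalizing both descend to the rationalization of $\nu$ on $M_{(0)}^B = M(\overline P_1 \cap \overline P_2, f)_{(0)}^B$; this is where naturality part \ref{prop:decomp;naturality}) of Proposition \ref{prop:decomp} and Proposition \ref{prop:p-local} do the real work. Everything else — the homology-equivalence argument for the co-$H$ identities, the obstruction-vanishing, and the assembly of the $(P_1\cup P_2)$-equivalence — is a routine repetition of the pattern already used in Propositions \ref{prop:decomp} and \ref{prop:intermediatecoH}.
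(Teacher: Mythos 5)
Your strategy---gluing a comultiplication on $M(\overline{P}_1\cap\overline{P}_2,f)$ out of $P_1$-local and $P_2$-local data via an arithmetic fracture square---is genuinely different from the paper's argument, and as written it has two concrete gaps. First, Lemma \ref{fibrewisepq} cannot be applied ``with $P=P_1$, $Q=P_2$'': that lemma requires $P\sqcup Q=\Pi$, whereas $P_1$ and $P_2$ are merely disjoint, so your two local pieces say nothing about the primes outside $P_1\cup P_2$ (where $M(\overline{P}_1\cap\overline{P}_2,f)$ agrees with $M$ and with neither $X^B_{(P_i)}$); you would need either a three-corner fracture or the partition $\overline{P}_2\sqcup P_2$. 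More seriously, the lemma's source must be $B\vee M_0$ with $M_0$ a \emph{finite} simply-connected complex, but the source of the comultiplication you are building is the Zabrodsky mixture $M(\overline{P}_1\cap\overline{P}_2,f)$ itself, which is not a finite complex (it merely has the homology of one, by part 3) of Proposition \ref{prop:decomp}); finiteness of the source is exactly what makes the Bousfield--Kan fracture square for homotopy \emph{sets} valid, so this step does not go through as stated. Second, you invoke Proposition \ref{prop:intermediatecoH} to conclude that $\psi(\overline{P}_1\cap\overline{P}_2,\phi(P_2,f))$ is a fibrewise co-$H$-map; that proposition requires its input map, here $\phi(P_2,f):M(\overline{P}_2,f)\to X$, to already be a fibrewise co-$H$-map, which is not a hypothesis---indeed $X$ is not yet known to carry any fibrewise co-$H$-structure, as that is the eventual goal. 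What you actually need is only the transport of $\nu^{P_2}$ along this map after $P_2$-localization (where it becomes an equivalence), which is fine, but then the gluing problem above reappears.

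The paper's proof sidesteps all of this: form the fibrewise pushout $W$ of $M(\overline{P}_1,f)\leftarrow M\to M(\overline{P}_2,f)$. Since both legs are fibrewise co-$H$-maps with respect to the \emph{same} structure $\nu$ on $M$, the three comultiplications glue to a fibrewise co-$H$-structure on $W$ with no localization argument at all; the commutativity of diagram (\ref{diag:pushout}) then yields a map $W\to M(\overline{P}_1\cap\overline{P}_2,f)$ inducing an isomorphism on homology, hence a fibrewise homotopy equivalence by \cite[Theorem 6.3]{Dold:fibration}, and the structure transports. The $(P_1\cup P_2)$-equivalence to $X$ is, as you say, immediate from the maps $\phi(P_i,f)$. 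If you wish to salvage the fracture route, replace Lemma \ref{fibrewisepq} by the fibrewise-pullback description of the mixed space from the proof of Proposition \ref{prop:decomp} (which needs no finiteness of the source) and drop the appeal to Proposition \ref{prop:intermediatecoH}.
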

\begin{proof}
The commutativity of the diagram (\ref{diag:pushout}) gives a map from the pushout $W$ of $M({\overline P}_{1},f)$ and $M({\overline P}_{2},f)$ to $M({\overline P}_{1} \cap {\overline P}_{2},f)$, which induces an isomorphism of homology groups.
Thus it follows that $W$ and  $M({\overline P}_{1} \cap {\overline P}_{2},f)$ have the same fibre homotopy type.
Since a fibrewise pushout of fibrewise co-$H$-structures is a fibrewise co-$H$-space, we obtain that $M({\overline P}_{1} \cap {\overline P}_{2},f) $ is also a fibrewise co-$H$-space such that $\psi({\overline P}_{1} \cap {\overline P}_{2},f): M \rightarrow M({\overline P}_{1} \cap {\overline P}_{2},f)$ is a fibrewise co-$H$-map. 

Now, it suffices to consider 
$\phi(P_1, f): M({\overline P}_{1},f) \rightarrow X$ and $\phi(P_2, f): M({\overline P}_{2},f) \rightarrow X$, making the diagram commutative, to conclude that the fibrewise pushout is $(P_1 \cup P_2)$-equivalent to $X$.
\end{proof}

\begin{lemma}\label{lem:key}

Let $f : M \to X$ be the map constructed in Lemma \ref{lem:rational}, where $M$ is a wedge-sum of a finite number of spheres.  Suppose that $X^{B}_{({P}_{1})}$ and $X^{B}_{({P}_{2})}$ are  fibrewise co-$H$-spaces with fibrewise co-$H$-structures $\nu_{1}$ and $\nu_{2}$, respectively,  such that $\ell^{B}_{({P}_{1})}{\comp}f$ and $\ell^{B}_{({P}_{2})}{\comp}f$ are fibrewise co-$H$-maps with respect to co-$H$-structures on $M$, 
$\nu'$ and $\nu''$ respectively. Then, there exist fibrewise co-$H$-structures $\nu^{0}$ on $M$, $\nu^{{P}_{1}}$ on $X^{B}_{({P}_{1})}$ and $\nu^{{P}_{2}}$ on 
$X^{B}_{({P}_{2})}$ such that $\ell^{B}_{({P}_{1})}{\comp} {f}$ and $\ell^{B}_{({P}_{2})}{\comp} {f}$ are fibrewise co-$H$-maps with respect to 
$\nu^{0}$, $\nu^{{P}_{1}}$ and $\nu^{{P}_{2}}$ respectively. 
\end{lemma}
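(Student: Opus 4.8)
The plan is to reconcile the two \emph{a priori} unrelated co-$H$-structures $\nu'$ and $\nu''$ on $M$ into a single structure $\nu^{0}$, using the rational rigidity of $M$. First I would localize everything rationally: since $f$ is a fibrewise rational equivalence and $M$ is a wedge of spheres, $\ell^{B}_{({P}_{1})}{\comp}f$ and $\ell^{B}_{({P}_{2})}{\comp}f$ both induce, after rationalization, the \emph{same} fibrewise co-$H$-map $\azlocal{\ell}{\comp}f : M \to \azlocal{X}$ up to the choice of comultiplication on the target. Hence the rationalizations of $\nu'$ and $\nu''$ are two co-$H$-structures on $\azlocal{M}$ both compatible with a co-$H$-structure on $\azlocal{X}$; because $M$ is a finite wedge of spheres and the relevant $k'$-invariants have finite order (as recorded before Lemma \ref{lem:rational}), the difference $\nu'-\nu''$, measured as a fibrewise map into $\LoopB{M}{\ast}_{B}\LoopB{M}$ through the standard difference-obstruction, is rationally trivial, hence of finite order.

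Next I would run the same homological induction on the skeleta $M_{i}$ as in the proof of Lemma \ref{lem:rational}, carrying along the \emph{pair} of comultiplications rather than a single one. Over $M_{i}=B$ there is nothing to do. At the inductive step $M_{i+1}=M_{i}\vee\Sigma S^{0}_{i}$, both $\nu'$ and $\nu''$ already restrict to (possibly different) co-$H$-structures on $M_{i}$ that, by induction, we may assume agree; the only freedom left is on the free summand $\Sigma S^{0}_{i}$, where the discrepancy is an element of a homotopy group of $\LoopB{M_{i}}{\ast}_{B}\LoopB{M_{i}}$ that is torsion by the rational argument above. I would then replace $f\vert_{\Sigma S^{0}_{i}}$ by a suitable nonzero multiple $a\cdot f\vert_{\Sigma S^{0}_{i}}$, exactly as in Lemma \ref{lem:rational}: multiplying by $a$ annihilates the torsion discrepancy after composition with $\ell^{B}_{({P}_{j})}{\comp}f$ for both $j=1,2$ simultaneously, at the cost only of changing $f$ by a rational self-equivalence, which is harmless since all conclusions are stated up to rational equivalence. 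This produces $\nu^{0}$ on $M$; pushing it forward along $\ell^{B}_{({P}_{j})}{\comp}f$ and using that these are the prescribed $P_{j}$-equivalences, one transports $\nu^{0}$ to co-$H$-structures $\nu^{P_{1}}$ and $\nu^{P_{2}}$ on $X^{B}_{({P}_{1})}$ and $X^{B}_{({P}_{2})}$ making the maps co-$H$-maps by construction.

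The main obstacle I anticipate is the bookkeeping in the inductive step: one must ensure that the single multiple $a$ can be chosen to work for $P_{1}$ and $P_{2}$ at once (take the product of the two multiples, as Lemma \ref{lem:rational} does for a finite set of primes), and that altering $f$ on the new cell does not disturb the co-$H$-compatibility already established on $M_{i}$ — this uses that the comultiplication $\nu^{0}_{i+1}$ restricts to $\nu^{0}_{i}$ on $M_{i}$ and that the difference term lives entirely in the summand indexed by $\Sigma S^{0}_{i}$. A secondary point requiring care is naturality of the ``difference'' construction $d\mapsto$ element of $[\Sigma S^{0}_{i},\LoopB{M_{i}}{\ast}_{B}\LoopB{M_{i}}]$ under the fibrewise localization maps, so that ``torsion after rationalization'' genuinely implies ``killed by the multiple $a$ after applying $\ell^{B}_{({P}_{j})}$''; this is the standard obstruction-theoretic yoga used throughout Section 3 and I would simply invoke it.
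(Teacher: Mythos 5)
There is a genuine gap, and it sits at the very first step of your plan. You claim that since $\nu'$ and $\nu''$ are both ``compatible with a co-$H$-structure on $\azlocal{X}$'' after rationalization, their difference, viewed in $[M,\LoopB{M}\ast_B\LoopB{M}]$, is rationally trivial and hence of finite order. But $\nu'$ is compatible (through the rational equivalence $f$) with the rationalization of $\nu_1$, while $\nu''$ is compatible with the rationalization of $\nu_2$, and these are two \emph{a priori different} co-$H$-structures on $\azlocal{X}$. An almost rational co-$H$-space such as $\azlocal{X}$ (a wedge of $B$ with rational spheres) carries many co-$H$-structures, differing by elements of a rational vector space of Whitehead-product type; two such structures that differ do so by a non-torsion class. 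So $d(\nu',\nu'')$ need not be torsion, and the rest of your argument --- killing a finite-order discrepancy by replacing $f\vert_{\Sigma S^0_i}$ with a multiple, as in Lemma \ref{lem:rational} --- has nothing to act on. In effect you have assumed the crux of the lemma: its entire content is to \emph{replace} $\nu_1,\nu_2$ (and $\nu',\nu''$) by new structures that become rationally compatible; if they were automatically so, little would remain to prove. A secondary problem is that your construction alters $f$ on each new cell, whereas the lemma is about the fixed $f$ of Lemma \ref{lem:rational}, and your final step of ``pushing $\nu^0$ forward along $\ell^B_{(P_j)}\comp f$'' is not available, since these maps are only $P_j$-equivalences, not equivalences.

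The paper's actual mechanism, which you are missing, is an arithmetic fracture rather than a torsion argument. After reducing via Proposition \ref{prop:intermediatecoH} to the intermediate spaces $M(\overline{P}_1,f)$ and $M(\overline{P}_2,f)$, one compares everything to the standard structure $\nu$ on the wedge of spheres $M$ and works by induction over the homology decomposition. At each stage the differences $d(\nu,\nu'_i)$ and $d(\nu,\nu''_i)$ become extendable over $M(\overline{P}_1,f)$ only after multiplication by some $s$ with $(s,\overline{P}_1)=1$, and over $M(\overline{P}_2,f)$ after multiplication by some $t$ with $(t,\overline{P}_2)=1$; since $P_1\cap P_2=\emptyset$ one can choose $n,m$ with $ns+mt=1$ and correct $\nu'_i,\nu''_i$ by $ns$- and $mt$-multiples of the discrepancy (with matching corrections to $\nu^{P_1}_i,\nu^{P_2}_i$), so that the two structures on $M$ are forced to agree on $M_{i+1}$ without any finiteness of order. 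If you want to repair your proposal, replace the ``torsion, then kill by a multiple'' step with this $ns+mt=1$ decomposition.
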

\begin{proof} Firstly remark that, to assume that there exist fibrewise co-$H$-structures on $M $ such that $\ell^{B}_{({P}_{1})}{\comp}f : M \rightarrow
X_{(P_1)}^B$ and $\ell^{B}_{({P}_{2})}{\comp}f: M \rightarrow
X_{(P_2)}^B$  are fibrewise co-$H$-maps, is equivalent by Proposition {\ref{prop:intermediatecoH}}  to assume that, there exist fibrewise co-$H$-structures on $M $ such that  
$\psi (\overline {P_1}, f): M \rightarrow M({\overline P_1}, f)$ and $\psi (\overline {P_2}, f): M \rightarrow M({\overline P_2}, f)$ are fibrewise co-$H$-maps.  To simplify notation, we will also refer to them  as $\nu^{{P}_{1}}$ on $M({\overline P_1}, f)$ and $\nu^{{P}_{2}}$ on 
$M({\overline P_2}, f) $. 

Now, since $M$ has the homotopy type of a wedge of spheres, there is a standard fibrewise co-$H$-structure $\nu$ on $M$ induced from the unique structure maps on spheres.
Let us denote by $d(\nu,\nu''), \, d(\nu,\nu'') : M \to \LoopB{M}\ast_{B}\LoopB{M}$ the fibrewise difference of the  fibrewise co-$H$-structure maps $\nu$, $\nu'$ and $\nu''$.

The proof will follow by induction on the homological decomposition of $M$.  Since the fibrewise differences $d(\nu,\nu')$, $d(\nu,\nu'')$ and $d(\nu',\nu'')$ are trivial on $M_{1}=M(X_{1}) = B$,  we put $\nu_{1}=\nu$, $\nu'_{1}=\nu'$, $\nu''_{1}=\nu''$, $\nu^{{P}_{1}}_{1}=\nu^{{P}_{1}}$ and $\nu^{{P}_{2}}_{1}=\nu^{{P}_{2}}$.

Secondly, let us assume that there are fibrewise co-$H$-structures 
 $\nu'_{i}$ and $\nu''_{i}$ on $M$, $\nu^{{P}_{1}}_{i}$ on $M({\overline P_1}, f)$ and $\nu^{{P}_{2}}_{i}$ on $M({\overline P_2}, f)$ such that $\psi (\overline {P_1}, f) $
  and $\psi (\overline {P_2}, f)$ are fibrewise co-$H$-maps. 
  We also assume that $d(\nu,\nu'_{i}) $ and $d(\nu,\nu''_{i})$ coincide on $M_{i}=M(X_{i})$ as induction hypotheses.
Since $\psi({\overline{{P}_{1}}},f)$ is $\overline{{P}_{1}}$-equivalence, there are extensions $d^{{P}_{1}}(\nu,\nu'_{i})$ and $d^{{P}_{1}}(\nu,\nu''_{i})$ of $sd(\nu,\nu'_{i})$ and $sd(\nu,\nu''_{i})$ on $M(\overline{{P}_{1}},f)$ for some $s$ such that $(s,\overline{{P}_{1}})=1$.
Similarly, we have extensions $d^{{P}_{2}}(\nu,\nu'_{i})$ and $d^{{P}_{2}}(\nu,\nu''_{i})$ of $td(\nu,\nu'_{i})$ and $td(\nu,\nu''_{i})$ on $M(\overline{{P}_{2}},f)$ for some $t$ such that $(t,\overline{{P}_{2}})=1$. 

Since ${P}_{1} \cap {P}_{2} = \emptyset$, we can choose integers $n$ and $m$ such that $ns+mt=1$. Let $\nu^{{P}_{1}}_{i+1}=\nu^{{P}_{1}}_{i}-nd^{{P}_{1}}(\nu,\nu'_{i})+nd^{{P}_{1}}(\nu,\nu''_{i})$ and $\nu^{{P}_{2}}_{i+1}=\nu^{{P}_{2}}_{i}+md^{{P}_{2}}(\nu,\nu'_{i})-md^{{P}_{2}}(\nu,\nu''_{i})$, where the sum is taken by $\nu^{{P}_{1}}_{i}$ and $\nu^{{P}_{2}}_{i}$ on 
$  M(\overline{{P}_{1}},f)$ and $ M(\overline{{P}_{2}},f)$, respectively.
Then $\nu^{{P}_{1}}_{i+1}$ and $\nu^{{P}_{2}}_{i+1}$ give fibrewise co-$H$-structures on $M(\overline{{P}_{1}},f) $ and $M(\overline{{P}_{2}},f) $.

Similarly, let $\nu'_{i+1}=\nu'_{i}-nsd(\nu,\nu'_{i})+nsd(\nu,\nu''_{i})$ and $\nu''_{i+1}=\nu''_{i}+mtd(\nu,\nu'_{i})-mtd(\nu,\nu''_{i})$, where the sum is taken by 
$\nu'_{i}$ and $\nu''_{i}$  on $M$, respectively. By construction,   $ \psi({\overline{{P}_{1}}},f)$ and $\psi({\overline{{P}_{2}}},f)$ are fibrewise co-$H$-maps with respect to the fibrewise co-$H$-structures $\nu'_{i+1}$ and $\nu''_{i+1}$ on $M$, and $\nu^{{P}_{1}}_{i+1}$ and $\nu^{{P}_{2}}_{i+1}$, respectively.

Now, using the fact that $ns+mt =1$, and also by classical arguments of connectivity, one can prove that $d(\nu, \nu'_{i+1}) $ coincides with $d(\nu, \nu''_{i+1}) $ over $M_{i+1}$.
Thus, by induction on $i$, we obtain the lemma.
\end{proof}
By Lemma \ref{lem:key}, we inductively obtain that $X$ is a fibrewise co-$H$-space, and therefore $\cover{X}$ is a co-$H$-space.
Hence, by Proposition \ref{prop:domination}, we obtain that $X$ is a co-$H$-space.
This completes the proof of Theorem \ref{thm:main}.

\begin{remark} Notice that  Theorem \ref{thm:main} cannot be directly obtained  from \cite[Theorem 1.1]{Costoya:loc_co-hopf}. Indeed, following along the lines of the previous paragraph, as $X^B_{(p)}$ is a fibrewise co-$H$-space, then ${\cover{X}}_{(p)}$ is a co-$H$-space for every prime $p$.  Since we are now in the simply-connected case, one could be tempted to use \cite{Costoya:loc_co-hopf} to conclude that $\cover{X}$
is a co-$H$-space (therefore $X$ is so, by Proposition \ref{prop:domination}).  Unfortunately, results in \cite{Costoya:loc_co-hopf}  are obtained for finite-type spaces while $\cover{X}$ is not.

\end{remark}
%
%

%
\end{document}